\newcommand{\ol}[1]{\overline{#1}}
\numberwithin{equation}{section}
\newcommand{\R}{\ensuremath{\mathbb{R}}}
\newcommand{\Rd}{\ensuremath{\mathbb{R}^d}}
\newcommand{\Rn}{\ensuremath{\mathbb{R}^n}}
\newcommand{\dm}{d}
\newcommand{\N}{\ensuremath{\mathbb{N}}}
\providecommand{\loc}{\operatorname{loc}}
\newcommand{\uloc}{\operatorname{uloc}}
\newcommand{\sd}{\, d}
\newcommand{\supp}{\operatorname{supp}}
\newcommand{\eps}{\ensuremath{\varepsilon}}
\newcommand{\weight}[1]{\langle #1\rangle}
\newcommand{\Div}{\operatorname{div}}
\newcommand{\sym}{\operatorname{sym}}
\newcommand{\Hzero}{H^{-1}_{(0)}}
\newcommand{\A}{\mathcal{A}}
\newcommand{\habil}[1]{}
\newtheorem{thm}{THEOREM}[section]
\newtheorem{lem}[thm]{Lemma}
\newtheorem{theorem}[thm]{Theorem}
\newtheorem{claim*}{Claim}
\newtheorem{assumption}[thm]{Assumption}
\newenvironment{proof*}[1]{{\bf Proof #1:}}{\hspace*{\fill}\rule{1.2ex}{1.2ex}\\ } 
\newenvironment{proof}{{\bf Proof:\,}}{\hspace*{\fill}\rule{1.2ex}{1.2ex}\\ }
\newcommand{\ve}{\mathbf{v}} 
\newcommand{\ue}{\mathbf{u}} 
\newcommand{\tn}[1]{\mathbf{#1}}
\newcommand{\vc}[1]{\mathbf{#1}}
\newcommand{\Se}{\mathbf{S}}
\newcommand{\De}{\mathbf{D}}
\newcommand{\Ke}{\mathbf{K}}
\newcommand{\He}{\mathbf{H}}
\newcommand{\STC}{\varepsilon}
\newcommand{\varphie}{\boldsymbol{\varphi}}
\providecommand{\Qz}{\ensuremath{Q_0}}
\providecommand{\Bz}{\ensuremath{B_0}}
\providecommand{\Iz}{\ensuremath{I_0}}
  \providecommand{\abstmp}[2]{{#1\lvert{#2}#1\rvert}}
  \providecommand{\abs}[1]{\abstmp{}{#1}}
  \providecommand{\bfu}{{\bf u}} 
  \providecommand{\bfv}{{\bf v}}
  \providecommand{\bfD}{{\bf D}}
  \providecommand{\bfG}{{\bf G}}
  \providecommand{\bfH}{{\bf H}}
  \providecommand{\bfK}{{\bf K}}
  \providecommand{\bfS}{{\bf S}}
\begin{document}
\begin{titlepage}  \title{Existence of Weak Solutions for a Diffuse Interface Model of Non-Newtonian Two-Phase Flows} \author{ Helmut Abels\footnote{Fakult{\"a}t
      f{\"u}r Mathematik, Universit{\"a}t Regensburg, 93040 Regensburg,
      Germany, e-mail: {\sf
        helmut.abels@mathematik.uni-regensburg.de}}, Lars
    Diening\footnote{Mathematisches Institut, LMU M{\"u}nchen, 80333
      M{\"u}nchen, Germany: e-mail: {\sf diening@math.lmu.de}}, and
    Yutaka Terasawa\footnote{Graduate School of Mathematical Sciences, The University of Tokyo,
Tokyo, 153-8914, Japan: e-mail: yutaka@ms.u-tokyo.ac.jp}}
\end{titlepage}
\maketitle
\begin{abstract}
We consider a phase field model for the flow of two partly miscible incompressible, viscous fluids of Non-Newtonian (power law) type. In the model it is assumed that the densities of the fluids are equal. We prove existence of weak solutions for general initial data and arbitrarily large times with the aid of a parabolic Lipschitz truncation method, which preserves solenoidal velocity fields and was recently developed by Breit, Diening, and Schwarzacher.  
\end{abstract}
\noindent{\bf Key words:} Two-phase flow,  diffuse interface model, Non-Newtonian fluids,
 Cahn-Hilliard equation,  free boundary value problems, Lipschitz truncation

\noindent{\bf AMS-Classification:} 
Primary: 35Q35; 
Secondary:
35Q30, 
35R35,
76D05, 
76D45, 

\section{Introduction}

We consider the flow of two macroscopically immiscible, incompressible Non-Newtonian fluids. In contrast to classical sharp interface models, a partial mixing of the fluids is taken into account, which leads to a so-called diffuse interface model. This has the advantage that flows beyond the occurrence of topological singularities e.g. due to droplet collision or pinch-off can be described. More precisely we consider 
\begin{alignat}{2}\label{eq:NSCH1}
  \rho\partial_t \ve + \rho\ve\cdot \nabla \ve - \Div \tn{S}(c,\tn{D}\ve) + \nabla p &= -\STC\Div (\nabla c\otimes
\nabla c),
\\\label{eq:NSCH2}
  \Div \ve &=0, 
\\\label{eq:NSCH3}
  \partial_t c + \ve\cdot\nabla c &= m\Delta\mu, 
\\\label{eq:NSCH4}
\mu &= \STC^{-1}\phi(c) - \STC\Delta  c 
\end{alignat}
in $Q_T=\Omega\times (0,T)$, where $\Omega\subseteq \Rn$, $n\geq 2$, is a bounded domain and $T\in (0,\infty)$.
Here $\ve$ is the mean velocity, $\tn{D}\ve=\frac12(\nabla \ve+\nabla \ve^T)$, $p$ is the pressure, $c$ is an
order parameter related to the concentration of the fluids e.g. the concentration difference or the
concentration of one component, and $\rho$ is the density of the fluids, which is assumed to be constant. Moreover, $\tn{S}(c,\tn{D}\ve)$ is the viscous part of the stress tensor of the mixture to be specified below, $\STC>0$ is a (small)
parameter, which is related to the ``thickness'' of the interfacial region, 
 $\Phi\colon\R\to \R$ is a homogeneous free energy
density and $\phi=\Phi'$ and $\mu$ is the chemical potential. Capillary forces due to surface tension are modeled by an extra contribution $\STC \nabla c \otimes
\nabla c:= \STC \nabla c (\nabla c)^T$ in the stress tensor leading to the term on the right-hand side of (\ref{eq:NSCH1}). Moreover, we note
that in the modeling diffusion of the fluid components is taken into account. Therefore $m\Delta \mu$ is
appearing in (\ref{eq:NSCH3}), where $m>0$ is a constant mobility coefficient.

We close the system by adding the boundary and initial conditions
\begin{alignat}{2}\label{eq:NSCH5}
  \ve|_{\partial\Omega} &=0 &\qquad& \text{on}\ \partial\Omega\times (0,T),\\\label{eq:NSCH6}
  \vc{n}\cdot\nabla c|_{\partial\Omega} = \vc{n}\cdot\nabla \mu|_{\partial\Omega} &= 0
  &\qquad& \text{on}\ \partial\Omega\times (0,\infty),\\\label{eq:NSCH7}
  (\ve,c)|_{t=0} &= (\ve_0,c_0) &\qquad& \text{in}\ \Omega.
\end{alignat}
Here $\vc{n}$ denotes the exterior normal at $\partial\Omega$.
We note that (\ref{eq:NSCH1}) can be replaced by
\begin{equation}
  \label{eq:NSCH1b}
    \rho\partial_t \ve + \rho\ve\cdot \nabla \ve - \Div \tn{S}(c,\tn{D}\ve) + \nabla g = \mu\nabla c 
\end{equation}
with $g= p + \frac{\STC}2|\nabla c|^2+ \STC^{-1}\Phi(c)$ since 
\begin{equation}
  \label{eq:ExtraStressIdent}
\mu \nabla c= \nabla \left(\frac{\STC}2 |\nabla c|^2 +\STC^{-1}\Phi(c)\right) - \STC\Div (\nabla c\otimes \nabla c).   
\end{equation}

In the case of Newtonian fluids, i.e., $\tn{S}(c,\tn{D}\vc{v})= \nu(c)\tn{D}\ve$ for some positive viscosity coefficient $\nu(c)$, the model  was first discussed by Hohenberg and Halperin~\cite{HohenbergHalperin}. Later it was derived in the frame work of rational continuum mechanics by Gurtin,
Polignone, Vi\~{n}als \cite{GurtinTwoPhase}. The latter derivation can be easily modified to include a suitable non-Newtonian behavior of the fluids. If e.g. $\tn{S}(c,\tn{D}\ve)$ is chosen such that $\tn{S}(c,\tn{D}\ve): \tn{D}\ve\geq 0$, the local dissipation inequality, which yields thermodynamical consistency, remains valid.
For results on existence of weak and strong solutions in the case of Newtonian fluids we refer to Starovoitov~\cite{StarovoitovModelH}, Boyer~\cite{BoyerModelH}, and A.~\cite{ModelH}. First analytic results for the system \eqref{eq:NSCH1}-\eqref{eq:NSCH4} for Non-Newtonian fluids of power-law type were obtained by Kim, Consiglieri, and Rodrigues~\cite{NonNewtonianModelH}. The authors proved existence of weak solutions if $q\geq \frac{3d+2}{d+2}$, $d=2,3$, where $q$ is the power describing the growth of the stress tensor with respect to $\mathbf{D}\ve$. For this range of $q$ monotone operator techniques can be applied. Moreover, in the case $d=3$ and $2\leq q\leq \frac{11}5$ the authors prove existence of weak solutions. Grasselli and Pra\v{z}ak~\cite{GrasselliPrazakNonNewtonianDIM} discussed the longtime behavior of solutions of \eqref{eq:NSCH1}-\eqref{eq:NSCH4} in the case $q\geq \frac{3d+2}{d+2}$, $d=2,3$.

The goal of this article is to extend the existence result to lower values of $q$ in order to include the physically important case of shear thinning flows. In the case of a single fluid existence of weak solutions for power-law type fluids was proved for the case $q>\frac{2d}{d+2}$, $d\geq 2$, by D., R{\r u}{\v{z}}i{\v{c}}ka, and Wolf~\cite{DieningRuzickaWolf}. The proof is based on a parabolic Lipschitz truncation method and a careful decomposition of the pressure, which is needed since the  Lipschitz truncation used does not preserve the divergence freeness of a velocity field. Recently a parabolic Lipschitz truncation method, which keeps divergence free velocity fields divergence free, was developed by Breit, D. and Schwarzacher~\cite{BreitDieningSchwarzacher}. In the present article we will use this method in order to prove existence of weak solutions to \eqref{eq:NSCH1}-\eqref{eq:NSCH7} if $\tn{S}(c,\tn{D}\ve)$ is of power law type with an exponent $q>\frac{2d}{d+2}$. Precise assumptions are made in the following.

For simplicity we assume that $\STC=\rho=1$. But all results are true for general (fixed) $\STC,\rho>0$.
Moreover, we assume: 
\begin{assumption}\label{assump:1}
 Let $\Omega\subset\Rd$, $d=2,3$, be a bounded domain with $C^3$-boundary and let $\Phi\in C([a,b])\cap C^2((a,b))$ be such that $\phi=\Phi'$ satisfies 
\begin{equation*}
\lim_{s\to a}
\phi(s)=-\infty, \qquad \lim_{s\to b} \phi(s)=\infty, \qquad \phi'(s)\geq -\alpha  
\end{equation*}
for some $\alpha\in \R$. Let $m>0$ 
and let $\tn{S}\colon [a,b]\times \R^{d\times d}\to \R^{d\times d}$ be such that
\begin{eqnarray}
  |\tn{S}(c,\tn{M})|&\leq& C(|\sym(\tn{M})|^{q-1}+1)\\ 
  |\tn{S}(c_1,\tn{M})-\tn{S}(c_2,\tn{M})|&\leq& C|c_1-c_2|(|\sym(\tn{M})|^{q-1}+1)\\ 
  \tn{S}(c,\tn{M}):\tn{M}&\geq& \kappa |\operatorname{sym} (\tn{M})|^q -C_1
\end{eqnarray}
for all $\tn{M}\in \R^{d\times d}$, $c,c_1,c_2\in [a,b]$, and some $C,C_1,\kappa>0$, $q\in (\frac{2d}{d+2},\infty)$.
Moreover, we assume that $\tn{S}(c,\cdot)\colon \R^{d\times d}_{sym}\to \R^{d\times d}_{sym}$ is strictly monotone for every $c\in [a,b]$.
\end{assumption}

For the following we denote
\begin{equation*}
  E_{mix}(c)= \int_\Omega \frac{|\nabla c|^2}2 \sd x + \int_\Omega \varphi(c) \sd x.
\end{equation*}

Let $\ve\in L^q(0,T;W^1_{q,0}(\Omega)^d)\cap L^\infty(0,T;L^2_\sigma(\Omega))$, $c\in L^\infty(0,T;H^1(\Omega))\cap L^2(0,T;H^2(\Omega))$ with $\varphi (c)\in L^2(\Omega\times (0,T))$, and $\mu\in L^2(0,T;H^1(\Omega))$, where $0<T<\infty$. Then $(\ve,c,\mu)$ is a weak solution of the system \eqref{eq:NSCH1}-\eqref{eq:NSCH7} if 
for any $\varphie \in C^{\infty}(\ol{Q_T})^d $ with $ \Div \varphie = 0 $ and $ {\rm supp} (\varphi) \subset
\subset \Omega \times [0, T)$ the following holds:
\begin{alignat}{1}
\nonumber
-\int_{Q_T} \ve \cdot \partial_t \varphie ~d(x, t) &- \int_{Q_T} \ve \otimes \ve : \De \varphie~d(x, t)
+ \int_{Q_T} \Se(c, \De \ve) : \De \varphie~d(x, t)\\
&= \eps \int_{Q_T} \nabla c \otimes \nabla c : \De \varphie~d(x, t) 
+\int_{\Omega} \ve_0 \cdot \varphie(0) \,dx
\end{alignat}
and for every  $ \psi \in C^{\infty}(\overline{\Omega}\times [0,T]) $ with $\supp \psi \subset\subset \ol{\Omega}\times [0,T)$
\begin{alignat}{1}\nonumber
 -\int_{Q_T} c \partial_t \psi~d(x, t) - \int_{\Omega} c_0 \psi(0)\sd x 
&+\int_{Q_T} (\ve \cdot \nabla c) \psi~d(x, t)\\\label{eq:ConvCVWeak}
&= -m \int_{Q_T} \nabla \mu \cdot \nabla \psi~d(x, t), \\\nonumber
\mu &= \phi(c) - \Delta c, \\\nonumber
\vc{n} \cdot \nabla c|_{\partial \Omega} &= 0
\end{alignat}
holds.
\begin{thm}
  Let Assumption~\ref{assump:1} hold true and let $0<T<\infty$. Then for any $\ve_0\in L^2_\sigma(\Omega)$ and $c_0\in H^1(\Omega)$ with $c_0(x)\in [a,b]$ almost everywhere there exists a weak solution $\ve\in L^q(0,T;W^1_{q,0}(\Omega))\cap L^\infty(0,T;L^2_\sigma(\Omega))$, $c\in L^\infty(0,T;H^1(\Omega))\cap L^2(0,T;H^2(\Omega))$ with $\varphi (c)\in L^2(\Omega\times (0,T))$, and $\mu\in L^2(0,T;H^1(\Omega))$ in the sense above.
\end{thm}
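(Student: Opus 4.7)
The plan is to construct weak solutions by a two-level approximation followed by a passage to the limit that relies crucially on the solenoidal parabolic Lipschitz truncation of Breit, Diening, and Schwarzacher in order to identify the weak limit of the nonlinear stress $\tn{S}(c,\tn{D}\ve)$ in the regime $q>\frac{2d}{d+2}$ where monotone operator techniques alone fail.

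At the first level I would regularize the stress, e.g.\ by replacing $\tn{S}(c,\tn{D}\ve)$ by $\tn{S}_\delta(c,\tn{D}\ve):=\tn{S}(c,\tn{D}\ve)+\delta|\tn{D}\ve|^{r-2}\tn{D}\ve$ with some large $r\ge \frac{3d+2}{d+2}$, so that the momentum equation fits into the classical monotone-operator framework and standard results (as in Kim--Consiglieri--Rodrigues) yield an approximate triple $(\ve_\delta,c_\delta,\mu_\delta)$. Existence at this level can be obtained by a Galerkin scheme in $\ve$ coupled with a solution operator for the Cahn--Hilliard equation at frozen velocity, together with a fixed point or time-discretization argument, while the singular potential $\phi$ is treated by a further regularization $\phi_\eps$ and the bounds $\phi'\ge -\alpha$ and $\phi(c_0)\in L^1$ propagate to keep $c_\delta\in [a,b]$ a.e.\ in the limit.

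Uniform (in $\delta$) estimates come from testing the momentum equation with $\ve_\delta$, the Cahn--Hilliard equation with $\mu_\delta$, and \eqref{eq:NSCH4} with $\partial_t c_\delta$; using the identity \eqref{eq:ExtraStressIdent} the capillary term cancels against $\mu_\delta\nabla c_\delta$, yielding the energy inequality
\begin{equation*}
\tfrac12\|\ve_\delta(t)\|_{L^2}^2+E_{mix}(c_\delta(t))+\int_0^t\!\!\int_\Omega \tn{S}_\delta(c_\delta,\tn{D}\ve_\delta):\tn{D}\ve_\delta\sd x\,ds+m\int_0^t\!\!\|\nabla\mu_\delta\|_{L^2}^2\,ds\le C.
\end{equation*}
These give $\ve_\delta\in L^\infty_tL^2_x\cap L^q_tW^{1,q}_{x,0}$, $c_\delta\in L^\infty_tH^1\cap L^2_tH^2$, $\mu_\delta\in L^2_tH^1$, together with uniform bounds on $\phi(c_\delta)$ via testing $\mu_\delta=\phi(c_\delta)-\Delta c_\delta$ with $\phi(c_\delta)-\overline{\phi(c_\delta)}$. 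Aubin--Lions then delivers $c_\delta\to c$ strongly in $L^2_tH^1$, hence a.e., and $\ve_\delta\to\ve$ strongly in $L^2_tL^2$ from $\partial_t\ve_\delta\in L^{s}_t(W^{1,s'}_0)^*$ for some $s>1$ (using that $\tn{S}$ is only $q$-growth and the capillary term lies in $L^2$ in space-time up to a gradient).

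The main obstacle is the identification of the limit $\overline{\tn{S}}=\tn{S}(c,\tn{D}\ve)$ when $q$ lies below $\frac{3d+2}{d+2}$: the natural test function $\ve_\delta-\ve$ is not admissible because the pressure decomposition is delicate (capillary, convective and elastic parts have different regularities) and because $\ve_\delta-\ve$ is not divergence-free in approximations based on projection. Here I invoke the divergence-preserving parabolic Lipschitz truncation of \cite{BreitDieningSchwarzacher}: applied to $\ve_\delta-\ve$ (after localizing by a cut-off so one may work with the solenoidal extension) it yields truncations $\ve_{\delta,\lambda}$ with $\|\nabla\ve_{\delta,\lambda}\|_{L^\infty}\lesssim\lambda$, $\Div\ve_{\delta,\lambda}=0$, coinciding with $\ve_\delta-\ve$ outside a bad set $B_\lambda$ whose measure is controlled. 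Using $\ve_{\delta,\lambda}$ as a test function in the approximate momentum equation minus the limit equation cancels the troublesome pressure, and all quadratic-in-$\ve$ terms as well as the capillary term $\nabla c\otimes\nabla c$ converge by the already-established strong convergences. Sending first $\delta\to 0$ and then $\lambda\to\infty$ together with a diagonal argument yields
\begin{equation*}
\limsup_{\delta\to 0}\int_{Q_T\setminus B_\lambda}\!\bigl(\tn{S}(c_\delta,\tn{D}\ve_\delta)-\tn{S}(c_\delta,\tn{D}\ve)\bigr):(\tn{D}\ve_\delta-\tn{D}\ve)\sd x\,dt=0,
\end{equation*}
where the continuity of $\tn{S}$ in $c$ (Assumption~\ref{assump:1}) handles the replacement of $c_\delta$ by $c$. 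Strict monotonicity then forces $\tn{D}\ve_\delta\to\tn{D}\ve$ a.e.\ on $Q_T\setminus B_\lambda$ and, by exhausting $\lambda$, a.e.\ on $Q_T$; Vitali's theorem combined with the $L^{q'}$-bound on $\tn{S}(c_\delta,\tn{D}\ve_\delta)$ finishes the identification. The remaining weak formulation, including the Cahn--Hilliard part and the capillary term $\nabla c\otimes\nabla c$ (for which $\nabla c\in L^2_tH^1\hookrightarrow L^2_tL^p$ for some $p>2$ gives a strong convergence), then passes to the limit without further difficulty, and $\mu=\phi(c)-\Delta c$ holds pointwise thanks to the a.e.\ convergence $c_\delta\to c$ and the monotonicity-type control on $\phi(c_\delta)$.
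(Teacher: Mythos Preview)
Your overall strategy is correct and coincides with the paper's at the decisive point: both arguments rest on the solenoidal parabolic Lipschitz truncation of Breit--Diening--Schwarzacher to identify the weak limit $\tilde{\tn{S}}=\tn{S}(c,\tn{D}\ve)$, and your handling of that step (localize, truncate, exploit strict monotonicity to get a.e.\ convergence of $\tn{D}\ve_\delta$, then Vitali) is exactly what the paper does.

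The construction of the approximate solutions, however, is organized differently. The paper does \emph{not} add a high-power regularizing stress $\delta|\tn{D}\ve|^{r-2}\tn{D}\ve$; instead it mollifies the convective and capillary terms via operators $\Phi_\eps(\ve)\,\ve\otimes\ve$ and $\Psi_\eps(\Div(\nabla c\otimes\nabla c))$, and then solves the regularized system by a Leray--Schauder fixed point built from (i) the known existence theory for the convective Cahn--Hilliard equation with the \emph{singular} potential (no regularization of $\phi$ is needed), and (ii) the monotone-operator solution of a generalized Stokes problem with frozen~$c$. Your route via $\tn{S}_\delta$ is also legitimate---the extra term lands in $\tn{G}_{2}$ as a strong $L^{r'}$ null sequence and disappears in the Lipschitz-truncation step---but it forces you to additionally regularize $\phi$ and to carry two limits, whereas the paper's mollification keeps the singular potential throughout and uses a single limit $\eps\to 0$. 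For the Cahn--Hilliard part of the limit the paper invokes a maximal monotone operator argument for $\mathcal{A}(c)=-\Delta c+\phi_0(c)$ rather than a.e.\ convergence plus Vitali; this is more robust for the singular $\phi$.

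One small inaccuracy: your sentence ``$\ve_\delta-\ve$ is not divergence-free in approximations based on projection'' is off---both $\ve_\delta$ and $\ve$ are solenoidal, so their difference is too. The genuine obstruction to testing with $\ve_\delta-\ve$ is its lack of admissible regularity, which is precisely what the Lipschitz truncation repairs while preserving $\Div=0$.
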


The structure of the article is as follows. In Section~\ref{sec:Preliminaries} we summarize some preliminary results needed in the following analysis. In Section~\ref{sec:Approx} prove existence of solutions of an approximate system to \eqref{eq:NSCH1}-\eqref{eq:NSCH7}, where the convective terms $\ve\cdot \nabla \ve$, $\ve\cdot \nabla c$, and the capillary term $\Div(\nabla c\otimes \nabla c)$ are smoothed in a suitable way. Finally, in Section~\ref{sec:Main} the existence of weak solutions is proved by passing to the limit in the approximate system with the aid of a solenoidal parabolic Lipschitz truncation method.

\section{Preliminaries}\label{sec:Preliminaries}

Throughout the paper the usual Lebesgue spaces with respect to the Lebesgue measure are denoted by $L^p(M)$, $1\leq p\leq \infty$, for some measurable $M\subset\R^N$. Moreover, $L^p(M;X)$ denotes its Banachspace-valued variant and $L^p(0,T;X)=L^P((0,T);X)$. Furthermore, $f\colon [0,\infty)\to X$ is in $L^p_{\loc}([0,\infty);X)$ if $f$ is strongly measurable and $f|_{[0,T]}\in L^p(0,T;X)$ for every $0<T<\infty$ and
\begin{eqnarray*}
  L^p_{\uloc}([0,\infty);X)&:=& \left\{f\in L^p_{\loc}([0,\infty);X): \|f\|_{L^p_{\uloc}([0,\infty);X)}<\infty\right\}\\
\|f\|_{L^p_{\uloc}([0,\infty);X)} &:=& \sup_{T\geq 0} \|f\|_{L^p(T,T+1;X)}. 
\end{eqnarray*}
 The standard $L^p$-Sobolev space is denoted by $W^m_p(\Omega)$. $W^m_{p,0}(\Omega)$ is the closure of $C_0^\infty(\Omega)$ in $W^m_p(\Omega)$ and $H^m(\Omega)=W^m_2(\Omega), H^m_0(\Omega)=W^m_{2,0}(\Omega)$. Furthermore we use the  notation $L^2_{(0)}(\Omega)=\{f\in L^2(\Omega): \int_{\Omega} f(x) \sd x=0\}$, $H^1_{(0)}(\Omega)= H^1(\Omega)\cap L^2_{(0)}(\Omega)$, and $H^{-1}_{(0)}(\Omega):= H^1_{(0)}(\Omega)'$. Finally $L^2_\sigma(\Omega)$ is the closure of divergence free $C_0^\infty(\Omega)$-vector fields in $L^2(\Omega)^d$.

We recall some results on the Cahn-Hilliard equation with convection term:
\begin{alignat}{2}\label{eq:CH1}
  \partial_t c + \ve\cdot\nabla c &= m\Delta \mu &\qquad&\text{in}\ \Omega\times (0,\infty), \\\label{eq:CH2}
\mu &= \phi(c) - \Delta c&\qquad& \text{in}\ \Omega\times (0,\infty),\\\label{eq:CH3}
  \vc{n}\cdot\nabla c|_{\partial\Omega} = \vc{n}\cdot\nabla  \mu|_{\partial\Omega} &= 0
  &\qquad& \text{on}\ \partial\Omega\times (0,\infty),\\\label{eq:CH4}
  c|_{t=0} &= c_0 &\qquad& \text{in}\ \Omega
\end{alignat}
for given $c_0$ with $E_{mix}(c_0)<\infty$ and $\ve\in L^\infty(0,\infty;L^2_\sigma(\Omega))\cap L^2(0,\infty;H^1_0(\Omega)^d)$. Here $\phi=\Phi'$ and $\Phi$ is as in Assumption~\ref{assump:1}.
In the following \eqref{eq:CH1} together with \eqref{eq:CH3} will be understood in the following weak form
\begin{equation*}
  \weight{\partial_t c(t),\varphi}_{H^{-1}_{(0)},H^1_{(0)}} + \int_\Omega \ve(x,t)\cdot \nabla c(x,t)\varphi (x) \sd x =- m\int_\Omega \nabla \mu(x,t)\cdot \nabla \varphi(x)\sd x 
\end{equation*}
for all $\varphi \in H^1_{(0)}(\Omega)$ and almost every $t\in (0,T)$, where $\weight{\cdot,\cdot}_{X',X}$ denotes the duality product. Moreover, $Q_t:= \Omega\times (0,t)$, $Q=\Omega \times (0,\infty)$.

\begin{thm}\label{thm:ExistenceCH}
  Let $\ve\in L^2(0,\infty;H^1_0(\Omega)^d)\cap L^\infty(0,\infty;L^2_\sigma(\Omega))$. Then
  for every $c_0\in H^1_{(0)}(\Omega)$ with $E_{mix}(c_0)<\infty$ 
  there is a unique solution $c\in BC([0,\infty);H^1_{(0)}(\Omega))$ of (\ref{eq:CH1})-(\ref{eq:CH4}) 
with $\partial_t c \in L^2(0,\infty;\Hzero(\Omega))$ and $\mu \in L^2_{\uloc}([0,\infty);H^1(\Omega))$. This solution satisfies
  \begin{equation}\label{eq:E1Estim} 
    E_{mix}(c(t)) + \int_{Q_t} m|\nabla \mu|^2 \sd (x,\tau) = E_{mix}(c_0)-  \int_{Q_t} \ve\cdot \mu\nabla c \sd (x,\tau)
  \end{equation}
  for all $t\in [0,\infty)$ and
  \begin{eqnarray}\nonumber
\lefteqn{    \|c\|_{L^\infty(0,\infty;H^1)}^2+ \|\partial_t c\|_{L^2(0,\infty; \Hzero)}^2 + \|\nabla \mu\|_{L^2(Q)}^2}\\\label{eq:CHEstim1}&\leq& C\left(E_{mix}(c_0)+ \|\ve\|_{L^2(Q)}^2\right)
\\\nonumber
  \lefteqn{  \|c\|_{L^2_{\uloc}([0,\infty);W^2_r)}^2+ \|\phi(c)\|_{L^2_{\uloc}([0,\infty);L^r)}^2}\\\label{eq:CHEstim2} 
 &\leq&  C_r\left(E_{mix}(c_0)+ \|\ve\|_{L^2(Q)}^2\right)
  \end{eqnarray}
   where $r=6$ if $\dm=3$ and $1<r<\infty$ is arbitrary if $d=2$. Here $C, C_r$ are independent of $\ve,c_0$. Moreover, for every $R>0$ the solution 
  \begin{equation*}
    c\in Y:=L^2_{\loc}([0,\infty);W^2_r(\Omega))\cap H^1_{\loc}([0,\infty);\Hzero(\Omega))    
  \end{equation*}
  depends continuously on 
  \begin{equation*}
    (c_0,\ve)\in X:=H^1(\Omega)\times L^2_{\loc}([0,\infty);L^2_\sigma(\Omega)) \quad \text{with}\ E_{mix}(c_0)+\|\ve\|_{L^2(0,\infty;H^1)}\leq R   
  \end{equation*}
  with respect to the weak topology on $Y$ and the strong topology on $X$.
\end{thm}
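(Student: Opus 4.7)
\textbf{Proof plan for Theorem \ref{thm:ExistenceCH}.} My strategy is to combine a double approximation (regularising the singular potential $\phi$ to $\phi_k\in C^1(\R)$ with globally bounded derivative, e.g.\ by truncating $\phi'$ outside an interval $[a+\frac1k,b-\frac1k]$ and extending affinely, and then Galerkin truncation in the eigenbasis of the Neumann Laplacian) with the energy identity \eqref{eq:E1Estim} as the fundamental a priori estimate. First, for fixed $k$ the Galerkin equation is a finite-dimensional ODE with locally Lipschitz right-hand side, so standard ODE theory yields local existence of approximate $c_{k,N}$. To extend to $[0,\infty)$ and pass to $N\to\infty$, I would test the discrete version of \eqref{eq:CH1} with the discrete chemical potential $\mu_{k,N}=\phi_k(c_{k,N})-\Delta_N c_{k,N}$ (valid because the basis can be chosen to be $H^2$-conforming with the Neumann condition), and use $\mu=\phi(c)-\Delta c$ together with $\partial_t\Phi_k(c)=\phi_k(c)\partial_t c$ and integration by parts to obtain
\begin{equation*}
\frac{d}{dt}E_{mix}^k(c_{k,N})+m\int_\Omega|\nabla\mu_{k,N}|^2\,dx=-\int_\Omega \ve\cdot(\mu_{k,N}\nabla c_{k,N})\,dx,
\end{equation*}
where the convective term on the left vanishes by $\Div \ve=0$ and $\ve|_{\partial\Omega}=0$. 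Absorbing the right-hand side via $|\int \ve\cdot\mu\nabla c|\le \frac m2\|\nabla\mu\|_2^2+C\|\ve\|_2^2\|\nabla c\|_\infty^2$ fails directly, so I would instead rewrite $\ve\cdot\mu\nabla c$ using the decomposition $\mu\nabla c=\nabla(\tfrac12|\nabla c|^2+\Phi_k(c))-\Div(\nabla c\otimes\nabla c)$ (cf.\ \eqref{eq:ExtraStressIdent}), drop the gradient part by $\Div \ve=0$, and use $\|\nabla c\otimes\nabla c\|_2\le\|\nabla c\|_4^2$ with the Ladyzhenskaya/Gagliardo--Nirenberg interpolation to close a Gronwall estimate for $\|\nabla c\|_2^2$ against $\|\ve\|_{L^2(H^1)}^2$.

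Next, I would derive the three estimates claimed in \eqref{eq:CHEstim1}--\eqref{eq:CHEstim2}. The $L^\infty(H^1)+L^2(\nabla\mu)$ bound comes directly from the energy identity above. The $L^2(H^{-1}_{(0)})$ bound on $\partial_tc$ follows from the weak form displayed just before the theorem: for $\varphi\in H^1_{(0)}(\Omega)$,
\begin{equation*}
|\langle\partial_t c,\varphi\rangle|\le \|\ve\|_2\|\nabla c\|_3\|\varphi\|_6+m\|\nabla\mu\|_2\|\nabla\varphi\|_2,
\end{equation*}
which is $L^2$ in time. For the $L^2_{\uloc}(H^1)$ bound on $\mu$ itself the crucial point is to control its mean: testing $\mu=\phi(c)-\Delta c$ with $c-\bar c$ (where $\bar c$ is the spatial mean of $c_0$, preserved by the equation) and using the standard inequality $\phi(s)(s-\bar c)\ge \delta|\phi(s)|-C$ valid for singular logarithmic-type potentials whose mean lies strictly inside $(a,b)$ gives
\begin{equation*}
\delta\int_\Omega|\phi(c)|\,dx\le\int_\Omega\nabla c\cdot\nabla(c-\bar c)\,dx+\int_\Omega\mu(c-\bar c)\,dx+C,
\end{equation*}
so $\int_\Omega|\phi(c)|$ and hence $|\bar\mu|$ is controlled by $\|\nabla c\|_2^2$ and $\|\nabla\mu\|_2$. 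Combined with Poincar\'e this yields \eqref{eq:CHEstim1}. For \eqref{eq:CHEstim2} I would view the second equation as a semilinear elliptic problem $-\Delta c+\phi(c)=\mu$ with Neumann data, multiply by $|\phi(c)|^{r-2}\phi(c)$ and use the monotone-type bound $\phi'\ge-\alpha$ to obtain an $L^r$ bound on $\phi(c)$ and hence on $\Delta c$, and finally use $W^2_r$ elliptic regularity on the Neumann Laplacian.

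To pass to the limit $k\to\infty,N\to\infty$ I would use the Aubin--Lions lemma with $H^1\hookrightarrow\hookrightarrow L^p\hookrightarrow H^{-1}_{(0)}$ to obtain $c_{k,N}\to c$ strongly in $L^2_{\loc}([0,\infty);L^p(\Omega))$ and pointwise a.e. The hardest point will be to identify the limit of the nonlinear singular term $\phi_k(c_{k,N})$: once pointwise convergence is established, the $L^r$ bound on $\phi_k(c_{k,N})$ together with Fatou guarantees $c(x,t)\in(a,b)$ a.e.\ (otherwise the $L^r$-norm would blow up because $\phi_k(c_{k,N})\to\pm\infty$), and then Vitali/dominated convergence upgrades weak to strong convergence of $\phi(c_{k,N})$ in $L^1_{\loc}$; the weak form of \eqref{eq:CH1}--\eqref{eq:CH2} then passes to the limit. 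Uniqueness and continuous dependence on $(c_0,\ve)$ would follow by taking the difference of two solutions $c_1-c_2$, testing with $(-\Delta_N)^{-1}(c_1-c_2)$ in $H^{-1}_{(0)}$, using the monotone splitting $\phi(c_1)-\phi(c_2)=\phi_m(c_1)-\phi_m(c_2)-\alpha(c_1-c_2)$ with $\phi_m$ monotone, and applying Gronwall; this identical scheme also produces weak-$Y$/strong-$X$ sequential continuity.
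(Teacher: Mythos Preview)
The paper does not give a proof of this theorem at all: immediately after the statement it simply says ``The theorem is proved in \cite[Theorem~6]{ModelH} in the case $m=1$. The case $m>0$ can be reduced to the case $m=1$ by a simple scaling in time and a scaling of the homogeneous free energy density~$f$.'' So there is nothing in the present paper to compare your argument against; the result is imported as a black box from the first author's earlier work.

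Your outline is in fact a reasonable sketch of how such a result is established (and is close in spirit to what \cite{ModelH} does: regularise the singular potential, approximate, derive the energy identity, control the mean of $\mu$ via the inequality $\phi(s)(s-\bar c)\ge\delta|\phi(s)|-C$, and pass to the limit using monotonicity and Aubin--Lions). One remark on the step you flag as delicate: rather than rewriting $\mu\nabla c$ via \eqref{eq:ExtraStressIdent} and interpolating $\|\nabla c\|_4$, the cleaner route is to integrate by parts once more,
\[
\int_\Omega \ve\cdot(\mu\nabla c)\,dx \;=\; -\int_\Omega c\,\ve\cdot\nabla\mu\,dx,
\]
and use that for the singular potential one has the pointwise bound $c\in[a,b]$, hence $\|c\|_\infty\le\max(|a|,|b|)$; this gives directly
$|\int \ve\cdot\mu\nabla c|\le \tfrac{m}{2}\|\nabla\mu\|_2^2+C\|\ve\|_2^2$
and produces exactly the linear dependence on $\|\ve\|_{L^2(Q)}^2$ stated in \eqref{eq:CHEstim1}. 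The catch is that the $L^\infty$ bound on $c$ is not automatic for the regularised problem $\phi_k$, so one has to either choose the regularisation so that a comparison principle still confines $c_{k,N}$ (e.g.\ keep $\phi_k$ monotone outside a large interval), or close the estimate at the regularised level by a different interpolation and recover the sharp form only in the limit. Your Gagliardo--Nirenberg route would work but requires coupling to the $H^2$ bound coming from the elliptic step, which makes the bookkeeping heavier than necessary.
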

The theorem is proved in \cite[Theorem 6]{ModelH} in the case $m=1$. The case $m>0$ can be reduced to the case $m=1$ by a simple scaling in time and a scaling of the homogeneous free energy density $f$.

We need the following theorem, which is a summary of Theorem~2.14 and
Corollary~2.15 of~\cite{BreitDieningSchwarzacher}.
\begin{theorem}
  \label{thm:sol_lip}
  Let $I_0$ be an open time interval, let $B_0$ be a ball in~$\R^d$,
  and let $Q_0 := I_0 \times B_0$.  Let $q,\sigma \in (1,\infty)$ with
  $q,q'>\sigma>1$, where $q'= \frac{q}{q-1}$. Let $\zeta\in
  C^\infty_0(\frac 16 \Qz)$ with $\chi_{\frac 18 \Qz}\leq\zeta\leq
  \chi_{\frac 16 \Qz}$. Let $\bfu_m$ and $\bfG_m$ satisfy $\partial_t
  \bfu_m = - \Div \bfG_m$ in the sense of
  distributions~$\mathcal{D}_{\Div}'( \Qz)$, where $\mathcal{D}_{\Div}=\{\boldsymbol{\varphi}\in C^\infty_0(\Qz)^d:\Div \varphi =0\}$.  Assume that
  $\bfu_m$ is a weak null sequence in $L^q(\Iz; W^{1,q}(\Bz))$, a
  strong null sequence in~$L^\sigma(\Qz)$ and bounded in $L^\infty(I_0,
  L^\sigma(B_0))$. Further assume that $\bfG_m = \bfG_{1,m} +
  \bfG_{2,m}$ such that $\bfG_{1,m}$ is a weak null sequence in
  $L^{q'}(\Qz)$ and $\bfG_{2,m}$ converges strongly to zero in
  $L^\sigma(\Qz)$. Then there exist a double sequence of open
  sets~$\mathcal{O}_{m,k}$, $k,m\in\N$, with $\limsup_{m\to\infty}\abs{{\cal O}_{m,k}} \leq c\, 2^{-k}
  2^{- q\, 2^k}$ for all $k\in\N$ such that for every $\bfK \in L^{q'}(\frac 16 Q_0)$
  \begin{align*}
    \limsup_{m\rightarrow\infty} \bigg|\int
    \big(\big(\bfG_{1,m}+\bfK):\nabla\bfu_m \big) \zeta
    \chi_{\mathcal{O}_{m,k}^\complement} \,d(x,t)\bigg| &\leq c\, 2^{-k/q}.
  \end{align*}
\end{theorem}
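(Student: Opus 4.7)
The plan is to take the approximate solutions $(\ve_n, c_n, \mu_n)$ produced in Section~\ref{sec:Approx}, in which the convective and capillary nonlinearities are smoothed, derive uniform a priori estimates from the energy identity and Theorem~\ref{thm:ExistenceCH}, and then pass to the limit in every term of the weak formulation. The only genuinely delicate step is identifying the weak limit of $\tn{S}(c_n,\tn{D}\ve_n)$ in the low-exponent regime $q>\tfrac{2d}{d+2}$; this is where the solenoidal parabolic Lipschitz truncation of Theorem~\ref{thm:sol_lip} enters. The obstacle is that for such small $q$ the interpolation bound only gives $\ve_n\otimes\ve_n\in L^{q(d+2)/(2d)}$ with an exponent that can be arbitrarily close to $1$, so $\ve_n$ itself is not an admissible test function and the classical Minty monotonicity trick fails.

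Testing the approximate momentum equation by $\ve_n$ and combining with (\ref{eq:E1Estim}) and (\ref{eq:ExtraStressIdent}) yields uniform bounds
\begin{align*}
\|\ve_n\|_{L^\infty(0,T;L^2_\sigma)\cap L^q(0,T;W^1_{q,0})} + \|c_n\|_{L^\infty(0,T;H^1)\cap L^2(0,T;H^2)} + \|\mu_n\|_{L^2(0,T;H^1)} &\leq C,
\end{align*}
together with $\|\phi(c_n)\|_{L^2(0,T;L^r)}\leq C$ from (\ref{eq:CHEstim2}) and $\|\partial_t c_n\|_{L^2(\Hzero)}\leq C$ from (\ref{eq:CHEstim1}). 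A bound on $\partial_t\ve_n$ in a sufficiently negative Sobolev space, read off the approximate momentum equation, combines with Aubin-Lions to give $c_n\to c$ strongly in $L^2(0,T;H^1(\Omega))$ and a.e., whence $\nabla c_n\otimes\nabla c_n\to\nabla c\otimes\nabla c$ strongly in $L^1(Q_T)$; an analogous Aubin-Lions step yields $\ve_n\to \ve$ strongly in $L^\sigma(Q_T)$ for some $\sigma\in (1,\min(q,q'))$. Passing to a subsequence, $\tn{S}(c_n,\tn{D}\ve_n)\rightharpoonup\overline{\tn{S}}$ weakly in $L^{q'}(Q_T)$ by the growth condition, and $(c_n,\mu_n)\to(c,\mu)$ in the sense of the continuity statement of Theorem~\ref{thm:ExistenceCH}.

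To show $\overline{\tn{S}}=\tn{S}(c,\tn{D}\ve)$, fix a parabolic cylinder $\Qz=\Iz\times \Bz\Subset Q_T$ with cutoff $\zeta$ as in Theorem~\ref{thm:sol_lip}. Setting $\bfu_n := \ve_n-\ve$, the approximate momentum equation takes the form
\begin{align*}
\partial_t\bfu_n = -\Div(\bfG_{1,n}+\bfG_{2,n}) \qquad \text{in}\ \mathcal{D}_{\Div}'(\Qz),
\end{align*}
where $\bfG_{1,n} := \tn{S}(c_n,\tn{D}\ve_n)-\overline{\tn{S}}$ is weak null in $L^{q'}(\Qz)$ and $\bfG_{2,n}$ collects $\ve_n\otimes\ve_n-\ve\otimes\ve$, the smoothed capillary contribution minus $\nabla c\otimes\nabla c$, and the vanishing approximation errors, each strong null in $L^\sigma(\Qz)$. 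Theorem~\ref{thm:sol_lip} then provides sets $\mathcal{O}_{n,k}$ with $\limsup_n|\mathcal{O}_{n,k}|\leq c\,2^{-k}2^{-q\,2^k}$ and, with the choice $\bfK := \overline{\tn{S}}-\tn{S}(c,\tn{D}\ve)\in L^{q'}$, the bound
\begin{align*}
\limsup_{n\to\infty}\left|\int \bigl((\bfG_{1,n}+\bfK):\nabla\bfu_n\bigr)\,\zeta\,\chi_{\mathcal{O}_{n,k}^\complement}\,d(x,t)\right| &\leq c\,2^{-k/q}.
\end{align*}
Using the Lipschitz dependence of $\tn{S}$ in $c$ (Assumption~\ref{assump:1}) to swap $\tn{S}(c,\tn{D}\ve)$ with $\tn{S}(c_n,\tn{D}\ve)$ up to an error vanishing by H\"older and the strong convergence $c_n\to c$, this becomes
\begin{align*}
\limsup_{n\to\infty}\int \bigl(\tn{S}(c_n,\tn{D}\ve_n)-\tn{S}(c_n,\tn{D}\ve)\bigr):(\tn{D}\ve_n-\tn{D}\ve)\,\zeta\,\chi_{\mathcal{O}_{n,k}^\complement}\,d(x,t) &\leq c\,2^{-k/q}.
\end{align*}

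Strict monotonicity of $\tn{S}(c,\cdot)$ combined with the standard trick of raising the nonnegative integrand to a power $\theta\in(0,1)$ to keep a family equi-integrable on the truncation complements, and sending first $n\to\infty$ and then $k\to\infty$, yields $\tn{D}\ve_n\to\tn{D}\ve$ a.e.\ on $\Qz$ along a subsequence. By Vitali, with the equi-integrable $L^{q'}$ bound on $\tn{S}(c_n,\tn{D}\ve_n)$, this identifies $\overline{\tn{S}}=\tn{S}(c,\tn{D}\ve)$ on $\Qz$, and a countable covering of $Q_T$ extends the identification to the whole space-time domain. All remaining terms in the weak formulation pass to the limit by the strong convergences established above, and attainment of the initial datum $\ve_0$ follows in the standard weak sense from the time regularity of $\ve$ via time-cut-off test functions.
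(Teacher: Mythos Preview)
Your proposal does not prove Theorem~\ref{thm:sol_lip}. What you have written is a sketch of the proof of the \emph{main existence theorem} of the paper (Section~\ref{sec:Main}), in which Theorem~\ref{thm:sol_lip} is invoked as a black box. You explicitly write ``Theorem~\ref{thm:sol_lip} then provides sets $\mathcal{O}_{n,k}$ \ldots'' --- that is, you assume the very statement you are asked to establish.

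Theorem~\ref{thm:sol_lip} is an abstract result about sequences $\bfu_m,\bfG_m$ on a parabolic cylinder satisfying $\partial_t\bfu_m=-\Div\bfG_m$ in $\mathcal{D}_{\Div}'(\Qz)$; there are no fluids, no Cahn--Hilliard system, no $c_n$, no $\mu_n$ in its hypotheses. The paper does not prove it either: it is quoted from~\cite{BreitDieningSchwarzacher} (Theorem~2.14 and Corollary~2.15 there), and the paragraph following the statement merely indicates that the proof relies on a solenoidal parabolic Lipschitz truncation, with the bad sets $\mathcal{O}_{m,k}$ arising as level sets of maximal functions built from $\bfu_m$ and $\bfG_m$. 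A genuine proof would have to construct these sets and the associated solenoidal truncation operator, and then verify the measure bound and the key estimate; none of this appears in your text.

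As a proof of the main existence theorem your outline is close to what the paper does in Section~\ref{sec:Main}, including the choice $\bfK=\overline{\tn{S}}-\tn{S}(c,\tn{D}\ve)$ and the $\theta$-power/H\"older argument to send $k\to\infty$. But that is a different statement.
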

The proof of Theorem~\ref{thm:sol_lip} is based on a solenoidal
Lipschitz truncation. In particular, the set~${\cal O}_{m,k}$ are
level sets of suitable maximal operators defined by $\bfu_m$ and
$\bfG_m$. It is a refinement of the parabolic Lipschitz truncation of
\cite{KinLew02} and~\cite{DieningRuzickaWolf}. The advantage of
Theorem~\ref{thm:sol_lip} is that the pressure can be completely
avoided by using a solenoidal Lipschitz
truncation. See~\cite{BreDieFuc12} for a solenoidal Lipschitz
truncation in the stationary case.

\section{Approximate System}\label{sec:Approx}

In order to approximate \eqref{eq:NSCH1}-\eqref{eq:NSCH7}
we consider
\begin{alignat}{2}\nonumber
  \partial_t \ve + \Div (\Phi_\eps(\ve)\ve\otimes \ve ) &- \Div \tn{S}(c,\tn{D}\ve) + \nabla p\\\label{eq:NSCH1'}
 &= 
 -\Psi_\eps(\Div(\nabla c\otimes \nabla c))&\quad&\text{in}\ \Omega\times (0,T),
\\\label{eq:NSCH2'}
  \Div \ve &=0, &\quad&\text{in}\ \Omega\times (0,T),
\\\label{eq:NSCH3'}
  \partial_t c + (\Psi_\eps \ve)\cdot\nabla c &= m\Delta\mu, &\quad&\text{in}\ \Omega\times (0,T),
\\\label{eq:NSCH4'}
\mu &= \phi(c) - \Delta  c. &\quad& \text{in}\ \Omega\times (0,T)
\end{alignat}
together with \eqref{eq:NSCH5}-\eqref{eq:NSCH7}, where 
 $\Psi_\eps \vc{w}= P_\sigma (\psi_\eps \ast \vc{w})|_{\Omega}$,
$\psi_\eps(x)= \eps^{-d}\psi(x/\eps)$, $\eps>0$, is a usual smoothing kernel such that $\psi(-x)=\psi(x)$ for all $x\in\Rn$, $\vc{w}$ is extended by $0$ outside of $\Omega$, and $P_\sigma$ is the Helmholtz projection. Moreover, $\Phi_\eps(\vc{s})=\Phi(\eps |\vc{s}|^2)$ for all $\vc{s}\in\R^d$, $\eps>0$ with some $\Phi\in C_0^\infty(\R)$ with $\Phi(0)=1$.

The approximate system is formulated weakly as follows:
For any $ \boldsymbol{\varphie} \in C^{\infty}(Q_T)^d $ with $ \Div \boldsymbol{\varphie} = 0 $ and $ {\rm supp} (\varphie) \subset
\subset \Omega \times [0, T),$
\begin{alignat}{1}
\nonumber 
-\int_{Q_T} \ve \cdot \partial_t \boldsymbol{\varphie} ~d(x, t) + \int_{Q_T}\left(\Se (c, \De \ve) - \ve \otimes \ve\,
\Phi_{\eps}(\ve)\right) : \De \boldsymbol{\varphie} ~d(x, t)\\\nonumber
=-\int_{Q_T} \Psi_{\eps}\left(\Div(\nabla c \otimes \nabla c)\right) \cdot \boldsymbol{\varphie} ~d(x, t)
+\int_{\Omega} \ve_0 \cdot \boldsymbol{\varphie}(0) dx, 
\end{alignat} 
holds and for any  $ \psi \in C^{\infty}( \overline{\Omega}\times [0, T)) $ with $ {\rm supp} (\varphie) \subset \subset  \overline{\Omega}\times[0, T) , $
\begin{alignat}{1}\nonumber
- \int_{Q_T} c\partial_t \psi ~d(x, t) &- \int_{\Omega} c_0 \psi(0)~dx
+ \int_{Q_T} (\Psi_\eps\ve\cdot\nabla c) \psi~d(x, t) \\\label{eq:WeakApprox2}
&= -\int_{Q_T} m\nabla \mu \cdot \nabla \psi~d(x, t)\\
\mathbf{n}\cdot \nabla c|_{\partial\Omega} &= 0
\end{alignat}
holds 
and
\begin{equation*}
  \mu = \varphi(c)-\Delta c \qquad \text{in}\ Q_T.
\end{equation*}

For the following let $V_p(\Omega)= W^1_{p,0}(\Omega)^d\cap L^p_\sigma(\Omega)$.
\begin{thm}\label{thm:ExistenceApprox}
Let $\frac{2d}{d+2}<q<\infty$, $d=2,3$.  For every $0<T<\infty$, $\ve_0\in L^2_\sigma(\Omega)$, $c_0\in H^1(\Omega)$ such that $c_0(x)\in [a,b]$ almost everywhere there is a weak solution $(\ve,c,\mu)$ of \eqref{eq:NSCH1'}-\eqref{eq:NSCH4'},\eqref{eq:NSCH5}-\eqref{eq:NSCH7} such that
  \begin{alignat*}{1}
    \ve&\in W^1_{q'}(0,T;V_q(\Omega)')\cap L^q(0,T;V_q(\Omega)),\\
    c&\in C([0,T];H^1(\Omega))\cap H^1(0,T;H^{-1}_{(0)}(\Omega))\cap L^2(0,T;W^2_r(\Omega)),\\
    \mu&\in L^2(0,T;H^1(\Omega))
  \end{alignat*}
  where $r=6$ if $d=3$ and $1\leq r<\infty$ is arbitrary if $d=2$. Moreover, for every $0\leq t\leq T$
  \begin{eqnarray}\nonumber
    \lefteqn{\frac12 \|\ve(t)\|_{L^2(\Omega)}^2+E_{mix}(c(t)) + \int_0^t\int_\Omega \tn{S}(c,\tn{D}\ve):\tn{D}\ve \sd x\sd \tau}\\\label{eq:EnergyEstimCH}
&& + \int_0^t\int_\Omega m|\nabla \mu|^2 \sd x\sd \tau=   \frac12 \|\ve_0\|_{L^2(\Omega)}^2+E_{mix}(c_0)=:E_0
  \end{eqnarray}
  and
  \begin{equation}\label{eq:EstimCH}
    \|c\|_{L^2(0,T;W^2_r(\Omega))}+\|\phi(c)\|_{L^2(0,T;L^r(\Omega))}\leq C(T,E_0)
  \end{equation}
  for some $C(T,E_0)>0$ depending continuously on $T,E_0$.
\end{thm}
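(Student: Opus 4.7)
The plan is to solve the approximate system by a Galerkin ansatz for the momentum equation combined with a direct application of Theorem~\ref{thm:ExistenceCH} to the Cahn--Hilliard subsystem, and then let the Galerkin dimension tend to infinity. Choose a basis $(w_k)_{k\in\N}$ of $L^2_\sigma(\Omega)$ consisting of smooth solenoidal functions vanishing on $\partial\Omega$ (for instance the Stokes eigenfunctions, which lie in $V_q(\Omega)$ and whose span is dense there) and set $V_N:=\operatorname{span}\{w_1,\dots,w_N\}$. Given a candidate $\ve_N\in C([0,T];V_N)$, the mollified projection $\Psi_\eps\ve_N$ lies in $C([0,T];C^\infty(\overline\Omega)^d)\cap L^\infty(0,T;L^2_\sigma(\Omega))$, so Theorem~\ref{thm:ExistenceCH} delivers a unique Cahn--Hilliard pair $(c_N,\mu_N)$ satisfying \eqref{eq:CHEstim1}--\eqref{eq:CHEstim2} and depending continuously on $\ve_N$. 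Reinserting $(c_N,\mu_N)$ into the Galerkin momentum equation
\begin{equation*}
\frac{d}{dt}(\ve_N,w_j)+(\Se(c_N,\De\ve_N),\De w_j)-(\Phi_\eps(\ve_N)\ve_N\otimes\ve_N,\nabla w_j)=-(\Psi_\eps\Div(\nabla c_N\otimes\nabla c_N),w_j),
\end{equation*}
$j=1,\dots,N$, with $\ve_N(0)=P_N\ve_0$, yields an ODE system for the coefficient vector whose right-hand side depends continuously on that vector; Peano's theorem then provides local solvability, and the uniform energy bound below extends the solution to $[0,T]$.

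The decisive ingredient is the Galerkin energy identity, which relies on two cancellations. First, the approximate convection vanishes under testing with $\ve_N$ because $\Phi_\eps(\vc{s})$ depends only on $|\vc{s}|^2$: writing $F$ for a primitive of the cutoff function that defines $\Phi_\eps$, one has $(\Phi_\eps(\ve_N)\ve_N\otimes\ve_N):\nabla\ve_N=\frac{1}{2\eps}\ve_N\cdot\nabla F(\eps|\ve_N|^2)$, and $\Div\ve_N=0$ together with $\ve_N|_{\partial\Omega}=0$ annihilate this term after integration by parts. Second, for the capillary contribution identity \eqref{eq:ExtraStressIdent} gives $-\Div(\nabla c_N\otimes\nabla c_N)=\mu_N\nabla c_N-\nabla(\tfrac12|\nabla c_N|^2+\Phi(c_N))$; the pure-gradient piece is killed by the Helmholtz projection inside $\Psi_\eps$, and self-adjointness of $\Psi_\eps$ on $L^2$ (a consequence of the evenness of the mollifier and of $P_\sigma$) converts the surviving term into $(\mu_N\nabla c_N,\Psi_\eps\ve_N)$, which is precisely the opposite of the contribution $-(\Psi_\eps\ve_N\cdot\nabla c_N,\mu_N)$ obtained by testing the Cahn--Hilliard equation with $\mu_N$. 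Summing the two tested equations yields \eqref{eq:EnergyEstimCH} at the Galerkin level and, combined with Assumption~\ref{assump:1} and Theorem~\ref{thm:ExistenceCH}, produces $N$--uniform bounds on $\ve_N\in L^\infty(0,T;L^2_\sigma)\cap L^q(0,T;V_q(\Omega))$ and the full Cahn--Hilliard regularity of $(c_N,\mu_N)$.

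The final step is the Galerkin limit $N\to\infty$. The cutoff keeps $\Phi_\eps(\ve_N)\ve_N\otimes\ve_N$ uniformly bounded in $L^\infty(Q_T)$, the growth bound in Assumption~\ref{assump:1} controls $\Se(c_N,\De\ve_N)$ in $L^{q'}(Q_T)$, and the Cahn--Hilliard bounds control the capillary right-hand side; reading off the Galerkin equation then yields $\partial_t\ve_N$ bounded in $L^{q'}(0,T;V_q(\Omega)')$. Because $q>\frac{2d}{d+2}$ the embedding $V_q(\Omega)\hookrightarrow L^2_\sigma(\Omega)$ is compact, so Aubin--Lions produces a subsequence with $\ve_N\to\ve$ strongly in $L^q(0,T;L^2_\sigma(\Omega))$; a parallel argument on $(c_N,\mu_N)$ gives strong convergence of $c_N$ in $C([0,T];L^r(\Omega))$ and of $\nabla c_N$ in $L^2(Q_T)$, which suffices to pass to the limit in every nonlinear term except the stress. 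The main obstacle is the identification of the weak $L^{q'}(Q_T)$-limit $\chi$ of $\Se(c_N,\De\ve_N)$. At this approximate stage no Lipschitz truncation is required since the inertial term is already uniformly bounded: testing the Galerkin equation with $\ve_N$ and combining the established strong convergences with the limiting energy identity yields $\limsup_N\int_{Q_T}\Se(c_N,\De\ve_N):\De\ve_N\leq\int_{Q_T}\chi:\De\ve$, and then the strict monotonicity of $\Se(c,\cdot)$ stipulated in Assumption~\ref{assump:1}, together with the Lipschitz dependence of $\Se$ in its first argument and the strong convergence of $c_N$, lets the standard Minty--Browder method identify $\chi=\Se(c,\De\ve)$. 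The estimate \eqref{eq:EstimCH} is then inherited directly from \eqref{eq:CHEstim2} applied to the limit.
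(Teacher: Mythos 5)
Your strategy (Galerkin discretization of the momentum equation, exact solution of the Cahn--Hilliard subsystem via Theorem~\ref{thm:ExistenceCH}, the two cancellations for the energy identity, then Aubin--Lions plus Minty--Browder to identify the stress) is a legitimate alternative to the paper's proof, which instead runs a Leray--Schauder fixed-point argument in the interpolation space $X_{\frac12}$ between $L^2(0,T;L^2_\sigma(\Omega))$ and $X_1=L^q(0,T;V_q(\Omega))\cap W^1_q(0,T;V_q(\Omega)')$: there, for a given velocity $\ue$ one first solves Cahn--Hilliard with drift $\Psi_\eps\ue$ and then solves the momentum equation \emph{exactly} by the abstract theory of monotone evolution equations, so that no weak limit of the stress ever has to be identified at the approximate stage; the price is the verification of complete continuity of the solution map and the a~priori bound for $\lambda F(\ue)=\ue$, which is precisely your energy identity. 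Your two cancellations (the $\Phi_\eps$-truncated convection annihilated by $\Div \ve_N=0$, and the capillary/transport cancellation via \eqref{eq:ExtraStressIdent} together with the self-adjointness of $\Psi_\eps$ on $L^2_\sigma(\Omega)$) coincide with the paper's.

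The one step that does not work as written is the solvability of the Galerkin system by ``Peano's theorem''. The coupled system is \emph{not} an ODE for the coefficient vector: $c_N(t)$ is the Cahn--Hilliard solution driven by $\Psi_\eps\ve_N$ on $[0,t]$, so the right-hand side of the $j$-th Galerkin equation at time $t$ depends on the whole trajectory $\{\ve_N(s):0\le s\le t\}$ and not only on $\ve_N(t)$. You therefore need a compactness fixed-point argument (Schauder in $C([0,T_0];V_N)$, or Leray--Schauder as in the paper) already at the discrete level --- i.e., essentially the device you were trying to bypass. Two smaller points should also be addressed. First, the bound on $\partial_t\ve_N$ in $L^{q'}(0,T;V_q(\Omega)')$ presupposes that the Galerkin projections are uniformly bounded on $V_q(\Omega)$, which is not automatic for Stokes eigenfunctions when $q\neq 2$; use a smoother basis or measure $\partial_t\ve_N$ in a large negative Sobolev space before invoking Aubin--Lions--Simon. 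Second, in the Minty step you must compare $\Se(c_N,\De\ve_N)$ with $\Se(c_N,\De\ve)$ --- same first argument, so that monotonicity applies pointwise --- and then use that $\Se(c_N,\De\ve)\to\Se(c,\De\ve)$ strongly in $L^{q'}(Q_T)$ by dominated convergence (the Lipschitz bound in $c$ plus $c_N\in[a,b]$); comparing with $\Se(c,\De\ve_N)$ instead leads to an equi-integrability problem for $|c_N-c|\,(|\De\ve_N|^{q-1}+1)\,|\De(\ve_N-\ve)|$, which is only bounded in $L^1(Q_T)$.
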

\begin{proof}
  Let 
  \begin{equation*}
    X_1:= L^q(0,T;V_q(\Omega))\cap W^1_q(0,T;V_q(\Omega)'),\quad X_0 := L^2(0,T;L^2_\sigma(\Omega)),
  \end{equation*}
 and let $X_{\frac12}:=(X_0,X_1)_{[\frac12]}$. Then $  X_1\hookrightarrow \hookrightarrow X_0$ by the Lemma of Aubin-Lions,  cf. e.g. J.-L.~Lions~\cite{QuelquesMethodes} or Simon~\cite{SimonCompactSets}, and since $X_1\hookrightarrow L^\infty(0,T;L^2_\sigma(\Omega))$. Therefore
\begin{equation*}
  X_1\hookrightarrow \hookrightarrow X_{\frac12}\hookrightarrow \hookrightarrow X_0
\end{equation*}
due to \cite[Theorem 3.8.1]{Interpolation}.

  We define a mapping $F\colon X_{\frac12} \to X_\frac12$ as follows: Given $\ue\in X_{\frac12}$, let $c$ be the solution of (\ref{eq:CH1})-(\ref{eq:CH4}) due to Theorem~\ref{thm:ExistenceCH} with $\ve(x,t)= (\Psi_\eps\ue)(x,t)\chi_{[0,T]}(t)$ and $c_0$ as in the assumptions. Then $\ue\mapsto c$ is continuous from the strong topology of $X_0$ to the weak topology of
  \begin{equation*}
    Y=L^2(0,T;W^2_r(\Omega))\cap H^1(0,T;H^{-1}_{(0)}(\Omega))
  \end{equation*}
  as stated in Theorem~\ref{thm:ExistenceCH}. Therefore $\ue\mapsto c$ is weakly continuous from $X_{\frac12}$ to $Y$.
  Moreover, $X_0\ni \ue\mapsto c\in Y\cap L^\infty(0,T;H^1(\Omega))$ is a bounded mapping and 
  $
    Y\hookrightarrow \hookrightarrow L^2(0,T;C^1(\ol{\Omega}))
  $
  by the Lemma of Aubin-Lions. 
  Interpolation implies  that $Y\ni c\mapsto \nabla c \in L^6(0,T;L^2(\Omega))$ and 
  $Y\ni c\mapsto \nabla c\otimes \nabla c\in L^{q'}(0,T;L^1(\Omega))$ are completely continuous mappings.
Hence $X_{\frac12}\ni \ue \mapsto \nabla c\otimes \nabla c\in L^{q'}(0,T;L^1(\Omega))$ is completely continuous since $X_{\frac12}\ni \ue \mapsto c\in Y$ is weakly continuous.
 
 Now let 
  \begin{eqnarray*}
    \lefteqn{\weight{\mathbf{f},\varphi}=\weight{\mathbf{f}(\ue),\boldsymbol{\varphi}}}\\
&=& \int_{Q_T}(\nabla c\otimes \nabla c):\tn{D}(\Psi_\eps \boldsymbol{\varphi}) \sd (x,t) - \int_{Q_T}(\Phi_\eps(\ue) \ue\otimes \ue):\tn{D} \boldsymbol{\varphi} \sd (x,t)
  \end{eqnarray*}
for all $\boldsymbol{\varphi}\in L^q(0,T;V_q(\Omega))$ and let 
$\ve=F(\ue)\in X_1$ be the solution of the abstract evolution equation 
\begin{alignat}{2}\label{eq:Stokes1}
  \frac{d}{dt} \ve(t) + \mathcal{A}_c(t)\ve(t) &= \mathbf{f}(t)&\qquad& \text{in}\ L^{q'}(0,T;V_q(\Omega)'),\\\label{eq:Stokes2}
  \ve(0)&= \ve_0&\qquad& \text{in}\ L^2_\sigma(\Omega),
\end{alignat}
where
\begin{equation*}
  \weight{\mathcal{A}_c(t)(\ve(t)),\boldsymbol{\varphi}}_{V_q(\Omega)',V_q(\Omega)} = \int_\Omega \tn{S}(c(t),\tn{D}\ve(t)): \tn{D}\boldsymbol{\varphi}\sd x
\end{equation*}
for all $\boldsymbol{\varphi}\in V_q(\Omega)$. Then $\mathcal{A}_c(t)\colon V_q(\Omega)\to V_q(\Omega)'$ is a strictly monotone, bounded, hemi-continuous, and coercive  operator and existence of a unique solution follows from standard results on evolution equations for monotone operators, cf. e.g. \cite[Proposition~4.1]{Showalter}. Moreover, from standard estimates it follows that  the mapping
\begin{equation*}
  L^{q'}(0,T;V_q(\Omega)')\times Y\ni (\mathbf{f},c) \mapsto \ve \in L^q(0,T;V_q(\Omega))\cap W^1_q(0,T;V_q(\Omega)')\in X_1
\end{equation*}
is bounded. From this and the uniqueness of the solution one can derive that the latter mapping is also weakly continuous as follows: If
\begin{eqnarray*}
  (\mathbf{f}_k,c_k)\rightharpoonup_{k\to\infty} (\mathbf{f},c) \qquad \text{in}\ L^{q'}(0,T;V_q(\Omega)')\times Y,
\end{eqnarray*}
then $c_k\to_{k\to \infty} c$ in $L^2(0,T;C^1(\ol{\Omega}))$ by compact embedding. Now, if $\ve_k$ is the solution of the evolution equation above with $\mathbf{f}_k$ instead of $\mathbf{f}$. Then $(\ve_k)_{k\in\N}$ is bounded in $X_1$. Moreover, for any weakly convergent subsequence $(\ve_{k_j})_{j\in\N}$ we have that 
\begin{eqnarray*}
  \A_{c_{k_j}} (\ve_{k_j}) \rightharpoonup_{j\to \infty } \A_c (\ve)\qquad \text{in}\ L^{q'}(0,T;V_q(\Omega)').
\end{eqnarray*}
Hence $\ve_{k_j}\rightharpoonup_{j\to \infty} \ve\in X_1$, where $\ve$ is the unique solution of \eqref{eq:Stokes1}-\eqref{eq:Stokes2}. Because of uniqueness of the solution, this holds true for any weakly convergent subsequence. Therefore $\ve_k\rightharpoonup_{k\to \infty} \ve\in X_1$. 

Moreover, it is easy to prove that $X_{\frac12}\ni \ue \mapsto \mathbf{f}(\ue)\in X_0$ is completely continuous.
Altogether we obtain that the mapping $X_{\frac12}\ni \ue \mapsto \ve=F(\ue)\in X_1$ is weakly continuous. Therefore $X_{\frac12}\ni \ue \mapsto \ve=F(\ue)\in X_{\frac12}$ is completely continuous.

 In order to apply the Leray-Schauder principle to $F$,
  cf. e.g. \cite[Chapter~II, Lemma~3.1.1]{BuchSohr}, it only remains to show
  that there is some $R>0$ such that 
  \begin{equation*}
    \lambda F(\ue)= \ue \ \text{for some}\ \ue\in X_{\frac12}, \lambda\in [0,1]\quad \Rightarrow
    \quad \|\ue\|_{X_\frac12}\leq R.    
  \end{equation*} 
  Assume that $\lambda F(\ue)=\ue$ for some $\ue\in X_{\frac12}$, $\lambda \in (0,1]$. (The case $\lambda=0$ is trivial).
  Hence $\ve=\lambda^{-1} \ue$ solves (\ref{eq:Stokes1})-(\ref{eq:Stokes2}) with
  right-hand side $f(\ue)$ as above. Thus taking the  product of
  (\ref{eq:Stokes1}) and $\ve$  we conclude that
\begin{eqnarray*}
  \lefteqn{\frac12\|\ve(T)\|_2^2 + \int_{Q_T}\tn{S}(c,\tn{D}\ve):\tn{D}\ve \sd (x,t)}\\
  &=& \frac12 \|\ve_0\|_2^2 - \lambda^{-1}(\Psi_\eps \ue\cdot \nabla \ue, \ue)_{Q_T} -(\Div\nabla c\otimes \nabla c, \Psi_\eps \ve)_{Q_T}\\
&=& \frac12 \|\ve_0\|_2^2 +\lambda^{-1}(\mu \nabla c, \Psi_\eps\ue)_{Q_T}
\end{eqnarray*}
where we have used (\ref{eq:ExtraStressIdent}) with $\eps=1$. Combining this with
(\ref{eq:E1Estim}) we obtain
\begin{eqnarray*}
  \lefteqn{\frac12\|\ve(T)\|_2^2 + \int_{Q_T}\tn{S}(c,\tn{D}\ve):\tn{D}\ve\sd (x,\tau)}\\
&&+\frac1{\lambda} E_{mix}(c(T))+
  \frac1{\lambda}\int_{Q_T}m|\nabla\mu|^2\sd (x,\tau) 
  = \frac12 \|\ve_0\|_2^2 +\frac1{\lambda} E_{mix}(c_0)
\end{eqnarray*}
and therefore 
\begin{eqnarray*}
\lefteqn{\|\ue\|_{L^\infty(0,T;L^2(\Omega))}^2+\|\ue\|_{L^q(0,T;V_q)}^q}\\
&=& \lambda^2 \|\ve\|_{L^\infty(0,T;L^2)}^2 + \lambda^q\|\ue\|_{L^q(0,T;V_q)}^q\leq
CE(\ve_0,c_0).  
\end{eqnarray*}
  Because of
 (\ref{eq:CHEstim1}), (\ref{eq:CHEstim2}),
there is some
$R>0$ such that 
\begin{equation*}
\|\ue\|_{X_1}\leq M(\|\textbf{f}(\ue)\|_{L^{q'}(0,T;V_q')})\leq M' (\|\ue\|_{L^\infty(0,T;L^2)})\leq R,  
\end{equation*}
 where $M,M'\colon [0,\infty)\to [0,\infty)$ are continuous and non-decreasing functions. Hence we can apply the
Leray-Schauder principle to conclude the existence of a fixed point $\ve=F(\ve)$,
$\ve\in X_{\frac12}$. Since the solution of \eqref{eq:Stokes1}-\eqref{eq:Stokes2} is in $X_1$, we even have $\ve\in X_1$. Finally, the energy identity is proved by the same calculations as
above with $\lambda=1$ and $T$ replaced by $t\in (0, T)$.  Finally it is easy to observe that $(\ve,c,\mu)$, is a weak solution in the sense above, where $(c,\mu)$ are determined by (\ref{eq:CH1})-(\ref{eq:CH4}) with $\ve(x,t)= (\Psi_\eps\ue)(x,t)\chi_{[0,T]}(t)$.
\end{proof}
\begin{lem}
  Let $(\eps_j)_{j\in\N}$ be a null sequence and $(\ve_j,c_j,\mu_j)$ be the solutions of \eqref{eq:NSCH1'}-\eqref{eq:NSCH4'},\eqref{eq:NSCH5}-\eqref{eq:NSCH7} above with $\eps$ replaced by $\eps_j$. Assume that $\ve_j\to_{j\to \infty} \ve$ in $L^2(0,T;L^2_\sigma(\Omega))$. Then for a suitable subsequence
  \begin{alignat}{2}\label{eq:cjConv}
    c_j&\rightharpoonup_{j\to \infty} c &\quad & \text{in}\ L^2(0,T;W^2_r(\Omega)),\\\label{eq:StrongcjConv}
    c_j&\to_{j\to \infty} c &\quad & \text{in}\ L^4(0,T;W^1_4(\Omega)),\\\label{eq:mujConv}
    \mu_j&\rightharpoonup_{j\to \infty}\mu &\quad &  \text{in}\ L^2(0,T;H^1(\Omega))
  \end{alignat}
  where $(c,\mu)$ solve \eqref{eq:NSCH3}, $c|_{t=0}=c_0$, and $\vc{n}\cdot\nabla \mu|_{\partial\Omega}=0$ in the sense that \eqref{eq:ConvCVWeak} holds for any $ \psi \in C^{\infty}(\overline{\Omega}\times [0,T]) $ with $\supp \psi \subset\subset \ol{\Omega}\times [0,T)$. Moreover, \eqref{eq:NSCH4}  holds pointwise almost everywhere in $Q_T$ and $\vc{n}\cdot\nabla c|_{\partial\Omega}$ in $\partial\Omega\times (0,T)$ in the trace sense.
\end{lem}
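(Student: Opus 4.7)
The plan is to start from the uniform bounds supplied by Theorem~\ref{thm:ExistenceApprox}, extract weakly convergent subsequences, upgrade to the stated strong convergences via Aubin--Lions plus elementary interpolation, and pass to the limit in the weak form of the approximate Cahn--Hilliard equations \eqref{eq:NSCH3'}--\eqref{eq:NSCH4'}.

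From \eqref{eq:EnergyEstimCH}--\eqref{eq:EstimCH}, uniformly in $j$, the family $c_j$ is bounded in $L^\infty(0,T;H^1(\Omega))\cap L^2(0,T;W^2_r(\Omega))$, $\phi(c_j)$ is bounded in $L^2(0,T;L^r(\Omega))$, and $\mu_j$ is bounded in $L^2(0,T;H^1(\Omega))$. Using \eqref{eq:NSCH3'} in weak form and integrating by parts (noting that $\Psi_{\eps_j}\ve_j$ is divergence-free with zero normal trace on $\partial\Omega$ and that $c_j(x,t)\in[a,b]$ a.e.) one deduces that $\partial_t c_j$ is uniformly bounded in $L^2(0,T;H^{-1}_{(0)}(\Omega))$. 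Reflexive weak compactness and a diagonal extraction then provide a subsequence along which $c_j\rightharpoonup c$ in $L^2(0,T;W^2_r)$, $\partial_t c_j\rightharpoonup\partial_t c$ in $L^2(0,T;H^{-1}_{(0)})$, $c_j$ converges weakly-$\ast$ to $c$ in $L^\infty(0,T;H^1)$, and $\mu_j\rightharpoonup\mu$ in $L^2(0,T;H^1)$, which yields \eqref{eq:cjConv} and \eqref{eq:mujConv}.

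For \eqref{eq:StrongcjConv} our choice of $r$ guarantees $W^2_r(\Omega)\hookrightarrow C^{1,\alpha}(\overline{\Omega})$ for some $\alpha>0$, hence the embedding $W^2_r(\Omega)\hookrightarrow W^1_\infty(\Omega)$ is compact; since also $W^1_\infty(\Omega)\hookrightarrow H^{-1}_{(0)}(\Omega)$, the Aubin--Lions lemma yields $c_j\to c$ strongly in $L^2(0,T;W^1_\infty(\Omega))$. Combining this with the uniform $L^\infty(0,T;H^1)$-bound and the elementary pointwise-in-time interpolation $\|\nabla u\|_{L^4(\Omega)}^2\leq \|\nabla u\|_{L^2(\Omega)}\|\nabla u\|_{L^\infty(\Omega)}$ gives
\[
\int_0^T\|\nabla c_j-\nabla c\|_{L^4(\Omega)}^4\,dt \leq \|c_j-c\|_{L^\infty(0,T;H^1)}^2\,\|c_j-c\|_{L^2(0,T;W^1_\infty)}^2 \longrightarrow 0,
\]
which, together with the strong $L^4(Q_T)$-convergence of $c_j$ itself (via $c_j\in[a,b]$ and pointwise convergence), establishes \eqref{eq:StrongcjConv}.

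With these convergences in hand, passage to the limit in \eqref{eq:WeakApprox2} is mostly routine: the terms $\int c_j\partial_t\psi$, $\int c_0\psi(0)$, and $\int m\nabla\mu_j\cdot\nabla\psi$ pass by weak convergence, and the convective term is treated by writing $\Psi_{\eps_j}\ve_j=P_\sigma(\psi_{\eps_j}\ast\ve_j)$ and noting that $\Psi_{\eps_j}\ve_j\to\ve$ in $L^2(Q_T)$ by standard mollifier properties applied to the hypothesis $\ve_j\to\ve$ in $L^2(0,T;L^2_\sigma(\Omega))$, so pairing with the strong $L^4$-convergence of $\nabla c_j$ shows $(\Psi_{\eps_j}\ve_j\cdot\nabla c_j)\psi\to(\ve\cdot\nabla c)\psi$ in $L^1(Q_T)$. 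The step I expect to require the most care is identifying $\mu=\phi(c)-\Delta c$ in the limit: while $\Delta c_j\rightharpoonup\Delta c$ in $L^2(0,T;L^r)$ is immediate from \eqref{eq:cjConv}, the singular term $\phi(c_j)$ needs a separate argument. Since each $c_j(x,t)\in[a,b]$ a.e., the subsequence can be refined so that $c_j\to c$ a.e., giving $c\in[a,b]$ a.e.; the uniform $L^2(0,T;L^r)$-bound on $\phi(c_j)$ together with the blow-up of $\phi$ at the endpoints then forces $c(x,t)\in(a,b)$ almost everywhere (otherwise the bound would be violated), so $\phi(c_j)\to\phi(c)$ a.e., and Vitali's convergence theorem upgrades this to $\phi(c_j)\rightharpoonup\phi(c)$ in $L^2(Q_T)$. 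The identity $\mu=\phi(c)-\Delta c$ then holds pointwise a.e.\ in $Q_T$, and the Neumann condition $\vc{n}\cdot\nabla c|_{\partial\Omega}=0$ is inherited from the corresponding property of $c_j$ by continuity of the Neumann trace on $L^2(0,T;W^2_r(\Omega))$.
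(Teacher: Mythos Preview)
Your argument is correct and, for the convergences \eqref{eq:cjConv}--\eqref{eq:mujConv} and the passage to the limit in \eqref{eq:WeakApprox2}, proceeds essentially as the paper does (uniform bounds, Aubin--Lions into $L^2(0,T;C^1(\overline{\Omega}))$ or $L^2(0,T;W^1_\infty)$, then the same interpolation against the $L^\infty(0,T;H^1)$ bound). The genuine divergence is in how you identify the nonlinear relation $\mu=\phi(c)-\Delta c$. The paper does not argue pointwise: it introduces the monotone shift $\phi_0=\phi+\alpha\,\cdot$, packages $\mathcal{A}(c)=-\Delta c+\phi_0(c)$ as a maximal monotone operator on $L^2(\Omega)$ (with the associated $H^2$/$L^2$ estimate from \cite{Habilitation,AsymptoticCH}), lifts it to $L^2(Q_T)$, and then invokes a Minty-type criterion (Showalter, Prop.~IV.1.6) using only $\mathcal{A}_T(c_j)\rightharpoonup\mu-\alpha c$ and $\int\mathcal{A}(c_j)c_j\to\int(\mu-\alpha c)c$. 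Your route instead exploits the strong (hence a.e.) convergence of $c_j$: Fatou's lemma together with the uniform $L^2$-bound on $\phi(c_j)$ rules out $c$ touching the endpoints on a set of positive measure, so $\phi(c_j)\to\phi(c)$ a.e., and boundedness in $L^2$ then pins down the weak limit. This is more elementary and entirely self-contained here, but it hinges on having pointwise convergence of $c_j$; the paper's monotone-operator argument is the more portable template, since it would still identify the limit in situations where one only controls $\int\mathcal{A}(c_j)c_j$ and has weak convergence of $\mathcal{A}(c_j)$, without needing a.e.\ information on $c_j$. One cosmetic remark: the step you label ``Vitali'' is really the standard fact that a.e.\ convergence plus an $L^2$ bound identifies the weak $L^2$ limit; Vitali proper would give strong $L^1$ convergence, which you neither need nor obtain.
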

\begin{proof}
Because of \eqref{eq:EnergyEstimCH}, \eqref{eq:EstimCH}, and \eqref{eq:NSCH4'}, $(c_j)_{j\in\N}$ and $(\mu_j)_{j\in\N}$ are bounded in $L^2(0,T;W^2_r(\Omega))$, $L^2(0,T,H^1(\Omega))$, respectively. Hence \eqref{eq:cjConv} and \eqref{eq:mujConv} hold for a suitable subsequence and some $c\in L^2(0,T;W^2_r(\Omega))$, $\mu\in L^2(0,T;H^1(\Omega))$. {} Moreover, \eqref{eq:NSCH3'} implies that $\partial_t c_j\in L^2(0,T;H^{-1}(\Omega))$ is bounded. Hence
\begin{equation*}
  c_j\to_{j\to\infty} c\qquad \text{in}\ L^2(0,T;C^1(\ol{\Omega}))
\end{equation*}
by the Lemma of Aubin-Lions and $W^2_r(\Omega)\hookrightarrow \hookrightarrow C^1(\ol{\Omega})$. Using that $(c_j)_{j\in \in\N}\subset BUC([0,T];H^1(\Omega))$ is bounded, a simple interpolation arguments yields \eqref{eq:StrongcjConv}.

In order to prove \eqref{eq:NSCH4}, we use a monotonicity argument. To this end let $\phi_0(s):= \phi(s)+\alpha s$ for all $s\in\R$ and let $\mathcal{A}\colon \mathcal{D}(A)\subseteq L^2(\Omega)\to L^2(\Omega)$ be defined $\mathcal{A}(c)= -\Delta c+\phi_0(c)$ for all $c\in \mathcal{D}(A)$ with
\begin{equation*}
  \mathcal{D}(\mathcal{A})= \{ u \in H^2(\Omega): \phi_0(u) \in L^2(\Omega),
            \phi_0'(u) |\nabla u|^2 \in L^1(\Omega), \partial_{\vc{n}} u |_{\partial \Omega} = 0 \}
\end{equation*}
Then $\phi_0\colon (a,b)\to \R$ is monotone, $\mathcal{A}$ is a maximal monotone operator,
and there is some $C>0$ such that
\begin{equation}\label{eq:AEstim}
  \|c\|_{H^2(\Omega)} + \|\phi_0(c)\|_{L^2(\Omega)}\leq C\left(\|\mathcal{A}(c)\|_{L^2(\Omega)}+ \|c\|_{L^2(\Omega)}+ 1\right)
\end{equation}
for all $c\in\mathcal{D}(A)$
 because of  \cite[Theorem 3.12.8]{Habilitation}, which is a variant of \cite[Theorem 4.3]{AsymptoticCH}.

Therefore $\mathcal{A}_T\colon\mathcal{D}(\mathcal{A}_T)\subset L^2(\Omega\times(0,T))\to L^2(\Omega\times (0,T))$ defined by $(\mathcal{A}_T c)(t)= \mathcal{A} (c(t))$ for almost every $t\in (0,T)$ and $c$ in
\begin{eqnarray*}
  \mathcal{D}(\mathcal{A}_T)&=& \left\{u\in L^2(\Omega\times (0,T)): u(t)\in \mathcal{D}(\mathcal{A})\ \text{for a.e.}\ t\in (0,T),\right.\\
&&\left. \ \mathcal{A}(u(\cdot))\in L^2(\Omega\times (0,T)\right\}
\end{eqnarray*}
is a monotone operator. Since $\mathcal{A}$ is maximal monotone, $I+\mathcal{A}\colon \mathcal{D}(A)\to L^2(\Omega)$ is bijective. Combining this with \eqref{eq:AEstim}, one easily obtains that also $I+\mathcal{A}_T\colon \mathcal{D}(A_T)\to L^2(\Omega\times (0,T))$ is bijective, cf. e.g. \cite[Lemma 1.3, Chapter IV]{Showalter}.
Moreover, because of \eqref{eq:NSCH4'},
\begin{equation*}
\mathcal{A}_T(c_j)=  \phi_0(c_j) -\Delta c_j\rightharpoonup_{j\to\infty} \mu-\alpha c \qquad \text{in}\ L^2(\Omega\times (0,T)).
\end{equation*}
Furthermore, 
\begin{equation*}
  \int_0^T\int_\Omega \left(\mu_j-\alpha c_j \right) c_j \sd x\sd t\to_{j\to\infty}   \int_0^T\int_\Omega \left(\mu-\alpha c \right) c \sd x\sd t
\end{equation*}
since $c_j\to_{j\to\infty} c$ and $\mu_j\rightharpoonup_{j\to\infty} \mu$ in $L^2(\Omega\times (0,T))$. Hence
\begin{equation*}
  \int_0^T\int_\Omega \mathcal{A}(c_j) c_j \sd x\sd t \to_{j\to\infty} \int_0^T\int_\Omega (\mu-\alpha c) c \sd x\sd t
\end{equation*}
and \cite[Proposition IV.1.6]{Showalter} implies that $c\in \mathcal{D}(\mathcal{A}_T)$ and 
\begin{equation*}
  \mathcal{A}_T (c) = \mu-\alpha c,
\end{equation*}
which is equivalent to \eqref{eq:NSCH4}.

Finally, \eqref{eq:ConvCVWeak} follows easily by passing to the limit in \eqref{eq:WeakApprox2} and 
using the fact that
\begin{equation*}
  \Psi_\eps \ve\to_{\eps \to 0} \ve \qquad \text{in}\ L^2(\Omega\times (0,T))^d
\end{equation*}
for all $\ve \in L^2(0,T,L^2_\sigma(\Omega))$.
Furthermore $\vc{n}\cdot\nabla c|_{\partial\Omega}=\lim_{j\to\infty}\vc{n}\cdot\nabla c_j|_{\partial\Omega}=0$ by the continuity of the trace operator.
\end{proof}

\section{Weak Solution}\label{sec:Main}

To construct a weak solution of the above system,
we use solutions of the approximate system \eqref{eq:NSCH1'}-\eqref{eq:NSCH4'} together with \eqref{eq:NSCH5}-\eqref{eq:NSCH6}.
The existence of weak solutions of the approximate system follows from Theorem~\ref{thm:ExistenceApprox}. In the following the solutions of the latter system are denoted by $(\ve_\eps,c_\eps,\mu_\eps)$ for $\eps>0$. Using the a~priori estimates given by \eqref{eq:EnergyEstimCH} and \eqref{eq:EstimCH}, we can conclude for a suitable subsequence $\eps_{i}\to_{i\to \infty} 0$ that
\begin{alignat}{2}\nonumber
\De \ve_{\eps_i} &\rightarrow \De \ve~~&&\text{weakly in}~~\mathit{L^q(Q_T)},
\\\nonumber
\ve_{\eps_i} &\rightarrow \ve~~&&\text{weakly in}~~\mathit{L^{q\frac{n+2}{n}}(Q_T)},
\\\nonumber
\Se \left(c_{\eps_i}, \De \ve_{\eps_i} \right) &\rightarrow \widetilde{\Se}~~&&\text{weakly in}~~\mathit{L^{q'}(Q_T)},
\\\label{weak-convergence}
\ve_{\eps_i}\otimes \ve_{\eps_i} \Phi_{\eps_i}\left( \ve_{\eps_i} \right) &\rightarrow \widetilde{\He}~~&&\text{weakly in}~~\mathit{L^{q\frac{n+2}{2n}}(Q_T)}.
\end{alignat}
Moreover, because of \eqref{eq:EstimCH}, \eqref{eq:NSCH3'}, and the Lemma of Aubin-Lions, it is easy to prove that
\begin{equation*}
   c_{\eps_i}\to_{i\to \infty}  c \qquad \text{in}\ L^2(0,T;C^1(\overline{\Omega}))
\end{equation*}
since $W^2_6(\Omega)\hookrightarrow C^1(\overline{\Omega})$ compactly. Interpolation with the boundedness of $c_\eps \in L^\infty(0,T;H^1(\Omega))$ yields
\begin{equation}\label{eq:StrongNablaC}
  \nabla c_{\eps_i}\to_{i\to \infty} \nabla c \qquad \text{in}\ L^4(Q_T).
\end{equation}


Let $ \Ke_{\eps}\in L^2(Q_T)^{d\times d}$ be such that
\begin{alignat}{1}\nonumber
\int_{Q_T}\Ke_{\eps}:\mathbf{D}\boldsymbol{\varphi} \sd (x,t) &= \int_{Q_T} \nabla c_{\eps} \otimes \nabla c_{\eps}: \mathbf{D} \Psi_{\eps} (\varphie)~d(x, t)\\
&- \int_{Q_T} \nabla c\otimes \nabla c : \mathbf{D} \varphie~d(x, t) 
\end{alignat}
for all $\boldsymbol{\varphi} \in L^2(0,T;H^1_0(\Omega)^d)$ and that
$\Ke_{\eps}\in L^2(Q_T)^d$ and
\begin{equation*}
  \|\Ke_\eps \|_{L^2(Q_T)}\leq C\|\Div (\nabla c\otimes\nabla c)-\Psi_\eps\Div
(\nabla c_\eps\otimes \nabla c_\eps)\|_{L^2(0,T;H^{-1}_0)}
\end{equation*}
for some $C>0$.
We can assume that $\Ke_{\eps}$ is pointwise a symmetric matrix.  Then
\begin{equation*}
\Ke_{\eps_i} \rightarrow 0~~\text{strongly in}~~\mathit{L^{2}(Q_T)^{d\times d}},
\end{equation*}
due to \eqref{eq:StrongNablaC}.

Since $q> \frac{2d}{d+2}$, there exists some $\sigma_0>1$ such that
$q\frac{d+2}{2d} > \sigma_0 >1$. Hence, due to~\eqref{weak-convergence}
we have for some $ \eps_{i} \rightarrow_{i\to \infty} 0$,
\begin{alignat}{2}\label{eq:StrongVConv}
  \ve_{\eps_i} &\rightarrow \ve &~~&\mathrm{strongly~~in}~~L^{2
    \sigma_0} (Q_T)
  \\
  \mathrm{and} ~~\ve_{\eps_i} \otimes \ve_{\eps_i} \Phi_{\eps_i}
  (|\ve_{\eps}|) &\rightarrow \ve \otimes \ve
  &&\mathrm{strongly~~in}~~L^{\sigma_0}(Q_T).
\end{alignat} 
We also have for $ i \rightarrow \infty, $ 
\begin{alignat}{2}
  \ve_{\eps_i} \rightarrow \ve~~~~\mathrm{strongly~~in}~~L^r (0, T;
  L^2 (\Omega)),\ \text{for all}\ 1\leq r < \infty
\end{alignat}
by interpolation of \eqref{eq:StrongVConv} with the boundedness of $(\ve_\eps)_{\eps\in (0,1)}\in L^\infty(0,T;L^2(\Omega))$.

Taking the limit of the weak form of the approximate system along the subsequence $ \eps_{i}, $
we obtain the following limit equation:
\begin{alignat}{1}
  \label{eq:limit_eq_v}
  - \int_{Q_T} \ve \cdot \partial_t \boldsymbol{\varphie} ~d(x, t) +
  \int_{Q_T} (\widetilde{\Se} - \ve \otimes \ve) : \mathbf{D}
  \boldsymbol{\varphie}~d(x, t) \\\nonumber =\int_{Q_T} \nabla c\otimes \nabla c : \De
  \varphie ~d(x, t) + \int_{\Omega} \ve_0 \cdot \boldsymbol{\varphie}
  (0)~dx.
\end{alignat}
for all $\boldsymbol{\varphie}\in C^\infty_0(Q_T)^d$ with $\Div \boldsymbol{\varphie}=0$.

By subtracting the above equation from the weak form of the approximate equations,
we have the following.
\begin{align*}
  \lefteqn{- \int_{Q_T} (\ve_{\eps} - \ve) \cdot \partial_t
    \boldsymbol{\varphi} ~d(x, t) + \int_{Q_T}
    \left(\mathbf{S}(c_{\eps}, \mathbf{D} \ve_{\eps}) -
      \widetilde{\mathbf{S}}\right) : \mathbf{D} \boldsymbol{\varphi}
    ~d(x, t)}
  \\
  \nonumber &=& \int_{Q_T} \left( \ve_{\eps} \otimes \ve_{\eps}
    \Phi_{\eps} ( \ve_{\eps} ) - \ve \otimes \ve \right) : \mathbf{D}
  \boldsymbol{\varphi} ~d(x, t) + \int_{Q_T}\Ke_{\eps}:\mathbf{D}\boldsymbol{\varphi}\sd (x,t).
\end{align*} 
Define $\ue_{\eps} := \ve_{\eps}-\ve$, then we can write this as
\begin{align}
  \label{eq:limit}
  \int\limits_{Q_T} \bfu_{\eps_i}
  \cdot\partial_t\varphie \,d(x,t) \,=\,
  \int\limits_{Q_T} \bfH_{\eps_i} : \nabla \varphie \,d(x,t)
\end{align}
for any $\varphie \in C^{\infty}(Q_T)^d $ with $ \Div
\varphie = 0 $ and $ {\rm supp} (\varphi) \subset \subset \Omega
\times [0, T)$, where
$\bfH_i:={\bfH}_{1,i}+\bfH_{2,i}$ with
\begin{align*}
  {\bfH}_{1, i}&:= {\bf S}(c_{\eps_i}, \mathbf{D} \ve_{\eps_i})-{\bf
    \tilde{S}},
  \\
  \bfH_{2, i}&:=\bfv_{\eps_i}\otimes \bfv_{\eps_i}
  \Phi_{\eps_i}(\ve_{\eps_i}) - \bfv\otimes \bfv - \Ke_{\eps_i} 
\end{align*}
Then we have the following convergences for suitable $\eps_i \to_{i \to
  \infty} 0$
\begin{alignat}{2}
  \ue_{\eps_i}  &\to 0 &~~&\text{weakly in } L^q(0,t; V_p(\Omega)),
  \\
  \ue_{\eps_i}  &\to 0 &~~&\text{strongly in } L^{2\sigma_0}(Q_T),
  \\
  \ue_{\eps_i}  &\to 0 &~~&\text{$*$-weakly in }
  L^\infty(0,T;L^2(\Omega)),
  \\
  \He_{1,i} &\to 0 &~~&\text{weakly in } L^{q'}(Q_T),
  \\
  \He_{1,2} &\to 0 &~~&\text{strongly in } L^{\sigma_0}(Q_T)
\end{alignat}
for some $1<\sigma_0<\min(q,q')$.
Let $I_0$ be a time interval, $B_0 \subset \R^d$ be a ball such that
$Q_0 := I_0 \times B_0 \subset\subset Q_T$. Let $\zeta\in
C^\infty_0(\frac 16 \Qz)$ with $\chi_{\frac 18 \Qz}\leq\zeta\leq
\chi_{\frac 16 \Qz}$. Then we can apply Theorem~\ref{thm:sol_lip} with
$\Ke = {\bf \tilde{S}} - {\bf S}(c, \mathbf{D} \ve)$ to obtain
\begin{align*}
  \limsup_{i\rightarrow\infty} \bigg|\int \big((\bfH_{1,i}+ {\bf
    \tilde{S}} - \bfS(c, \mathbf{D}\ve)):\nabla (\bfv_{\eps_i} - \bfv)
  \big) \zeta \chi_{\mathcal{O}_{i,k}^\complement} \,d(x,t)\bigg| \leq
  c\, 2^{-k/q}.
\end{align*}
In other words
\begin{align*}
  \limsup_{i\rightarrow\infty} \bigg|\int \Big(\big(\bfS(c_{\eps_i},
  \mathbf{D}\bfv_{\eps_i})-\bfS(c, \mathbf{D}\bfv)\big): \mathbf{D}(\ve_{\eps_i}
  - \bfv) \Big) \zeta \chi_{\mathcal{O}_{i,k}^\complement}
  \,d(x,t)\bigg| \leq c\, 2^{-k/q}.
\end{align*}
Let $\theta \in (0,1)$. Then by H{\"o}lder's inequality and $0 \leq \zeta
\leq 1$ we get
\begin{align*}
  \lefteqn{\limsup_{i\rightarrow\infty} \bigg|\int
    \Big(\big(\bfS(c_{\eps_i}, \mathbf{D}\bfv_{\eps_i})-\bfS(c,
    \mathbf{D}\bfv)\big): \mathbf{D}(\ve_{\eps_i} - \bfv) \Big)^\theta
    \zeta \chi_{\mathcal{O}_{i,k}} \,d(x,t)\bigg|} \qquad & 
  \qquad\qquad\qquad\qquad\qquad\qquad\qquad\qquad\qquad\qquad\qquad\qquad
  \\
  &\leq c\, \limsup_{i \to \infty} \abs{\mathcal{O}_{i,k}}^{1-\theta}
  \leq c\, 2^{-(1-\theta)\frac kq}.
\end{align*}
This, the previous estimate and H{\"o}lder's inequality give
\begin{align*}
  \limsup_{i\rightarrow\infty} \bigg|\int \Big(\big(\bfS(c_{\eps_i},
  \mathbf{D}\bfv_{\eps_i})-\bfS(c, \mathbf{D}\bfv)\big):
  \mathbf{D}(\ve_{\eps_i} - \bfv) \Big)^\theta \zeta \,d(x,t)\bigg|
  &\leq c\, 2^{-(1-\theta)\frac kq}.
\end{align*}
For $k\to \infty$ the right hand side converges to zero.  Now, the
monotonicity of $\bfS$ and $\zeta \geq \chi_{\frac 18 Q_0}$ implies
that $\bfS(c_{\eps_i}, \mathbf{D}\bfv_{\eps_i})\to\bfS(c,
\mathbf{D}\bfv)$ a.e. on $\frac 18 \Qz$. Since~$Q_0$ was an arbitrary
space time cylinder in $Q_T$, we get $\bfS(c_{\eps_i},
\mathbf{D}\bfv_{\eps_i})\to\bfS(c, \mathbf{D}\bfv)$ a.e. on $Q_T$.
Since $\bfS(c_{\eps_i}, \bfD \bfv_{\eps_i}) \to \bfS(c, \bfD \bfv)$
weakly in $L^{q'}(Q_T)$, we get as desired $\tilde{\bfS} = \bfS(c,
\bfD \bfv)$. This and~\eqref{eq:limit_eq_v} prove that $\bfv$ solves
\begin{alignat}{1}
  \label{eq:limit_eq_v_final}
  - \int_{Q_T} \ve \cdot \partial_t \boldsymbol{\varphie} ~d(x, t) +
  \int_{Q_T} (\bfS(c,\bfD \bfv) - \ve \otimes \ve) : \mathbf{D}
  \boldsymbol{\varphie}~d(x, t) \\\nonumber =\int_{Q_T} {\Ke} : \De
  \varphie ~d(x, t) + \int_{\Omega} \ve_0 \cdot \boldsymbol{\varphie}
  (0)~dx
\end{alignat}
for any $\varphie \in C^{\infty}(Q_T)^d $ with $ \Div
\varphie = 0 $ and $ {\rm supp} (\varphie) \subset \subset \Omega
\times [0, T)$.

\subsection*{Acknowledgement} 
This work was supported by the SPP 1506 "Transport Processes
at Fluidic Interfaces" of the German Science Foundation (DFG) through
the grant AB 285/4-1. Moreover, T. was supported by JSPS Research Fellowships for Young 
Scientists
and by JSPS Program for Leading Graduate Schools. The supports are gratefully acknowledged.


\def\cprime{$'$} \def\ocirc#1{\ifmmode\setbox0=\hbox{$#1$}\dimen0=\ht0
  \advance\dimen0 by1pt\rlap{\hbox to\wd0{\hss\raise\dimen0
  \hbox{\hskip.2em$\scriptscriptstyle\circ$}\hss}}#1\else {\accent"17 #1}\fi}

\end{document}